\theoremstyle{definition}
\newtheorem{theorem}{Theorem}[section]
\newtheorem{lemma}[theorem]{Lemma}
\newtheorem{definition}[theorem]{Definition}
\newtheorem{corollary}[theorem]{Corollary}
\newtheorem{example}[theorem]{Example}
\newtheorem{notation}[theorem]{Notation}
\newtheorem{observation}[theorem]{Observation}
\newtheorem{construction}[theorem]{Construction}
\begin{document}

\title[Generalized edgewise subdivisions]{Generalized edgewise
subdivisions}
\author{Katerina Velcheva}
\date{}
\newcommand{\C}{\mathbb C}
\newcommand{\R}{\mathbb R}
\newcommand{\Z}{\mathbb Z}
\newcommand{\Q}{\mathbb Q}
\newcommand{\N}{\mathbb N}

\begin{abstract}
In this paper we classify endofunctors on the
simplex category, and we identify those that induce weak
equivalence preserving functors on the category of simplicial
sets.
\end{abstract}

\maketitle

\section{Introduction}

A \textit{generalized edgewise subdivision functor} is a weak
equivalence preserving endofunctor on the category of simplicial
sets that is induced by an endofunctor on the simplex category. In many
cases, it takes a general simplicial set  and
returns a \textit{nicer} one. Perhaps the most famous
generalized edgewise subdivision functor is Segal's subdivision\cite{segalsub}.

In this paper we classify all generalized edgewise subdivision
functors. 
\textit{Segal's edgewise subdivision functor} is the endofunctor
on the category of simplicial sets $SSet$ induced by the
concatenation of the opposite and the identity endofunctors on
the simplex category $\Delta$. Segal's edgewice subdivision is used, for example, to relate Quillen's Q-construction to  Waldhausen's S-construction. \cite{kt}

In Section 2 we recall the definitions of \textit{simplex
category} and the \textit{interval category} and we prove that
the two categories are dual.
 
In Section 3 we classify the endofunctors on the simplex
category by studying the functors that they induce on the
category of simplicial sets. The main idea of the classification
is that an endofunctor $T^*$ on $SSet$ that is induced by an
endofunctor $T$ on $\Delta$ is completely determined by its
value on the simplicial set $\Delta^1$. The duality between the
simplex category and the interval category allows us to regard
$T^*(\Delta^1)$ as a simplicial interval. A specific
interpretation of the structure of this simplicial interval
gives the desired classification.
 
The main result of the paper is proved in Section 4, where we
 classify the
endofunctors on the simplex category that induce  weak
equivalence preserving functors on the category of simplicial
sets. This answers a question posed to us by Clark Barwick. 
 
 \section{Simplex category an simplicial sets}
In this section we introduce basic definitions and notations
that will be used in the paper. We show that the simplex
category and the interval category are dual.

\begin{definition}
The \textit{simplex category} $\Delta$ has as objects totally
ordered nonempty finite sets and as morphism order preserving
maps.
\end{definition}  

\begin{definition} The \textit{interval category} $\mathcal{I}$
has as objects totally ordered finite sets with at least two
elements and as morphisms order preserving and endpoint
preserving maps.
\end{definition}

Let define $[n]$ and $\{m\}$ to be the following objects:
\[[n]:=\{0,1,...,n\} \in \Delta\] \[ \{m\}:=\{0,1,...,m\}\in
\mathcal{I}.\]

In his paper \cite{jo} Joyal proves that the simplex category
and the interval category are dual. For future use let us give a short proof
here.

\begin{theorem}\label{duality}
The simplex category $\Delta$ and the interval category
$\mathcal{I}$ are dual.
\end{theorem}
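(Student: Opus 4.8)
The plan is to establish a contravariant equivalence $\Delta \simeq \mathcal{I}$ by constructing an explicit functor in each direction and showing they are mutually inverse. The key observation is that for a totally ordered finite set $S$, the order-preserving maps out of $S$ into the two-element interval $\{1\}=\{0,1\}$ correspond exactly to the ``cuts'' of $S$: each such map is determined by the threshold where it switches from $0$ to $1$. These cuts are themselves totally ordered, and they naturally carry the endpoints given by the two constant maps. This is the mechanism that turns an object of $\Delta$ into an interval and reverses the direction of morphisms.

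First I would define the functor $F\colon \Delta^{op} \to \mathcal{I}$ on objects by sending $[n]$ to the set $\operatorname{Hom}_{\Delta}([n],\{1\})$ of order-preserving maps into the two-point interval, ordered pointwise; concretely this is a totally ordered set with $n+2$ elements, and it has a least element (the constant map at $0$) and a greatest element (the constant map at $1$), which serve as the distinguished endpoints. On a morphism $f\colon [m]\to[n]$ in $\Delta$, the functor $F$ sends it to precomposition $-\circ f\colon \operatorname{Hom}([n],\{1\})\to \operatorname{Hom}([m],\{1\})$, which is visibly order preserving and sends constant maps to constant maps, hence is a morphism in $\mathcal{I}$. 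Next I would define the candidate inverse $G\colon \mathcal{I}^{op}\to \Delta$ symmetrically, sending an interval $\{m\}$ to the set of endpoint-preserving order maps $\operatorname{Hom}_{\mathcal{I}}(\{m\},\{1\})$ with its pointwise order, which strips off the two endpoints and leaves a totally ordered set, and acting on morphisms again by precomposition.

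The core of the proof is then to verify that $G\circ F$ and $F\circ G$ are naturally isomorphic to the respective identities. Here I would exhibit the natural evaluation/double-dualization isomorphism: an element $i\in[n]$ determines the order-and-endpoint-preserving map $\operatorname{Hom}([n],\{1\})\to\{1\}$ given by evaluation at $i$, and one checks this assignment $[n]\to GF([n])$ is a bijection compatible with the order and natural in $[n]$. The analogous check on the interval side completes the equivalence. In practice it is cleanest to count elements first (an order map $[n]\to\{1\}$ is determined by the unique ``jump point,'' giving $n+2$ such maps, of which $n$ are non-constant) and then confirm that evaluation realizes the identification explicitly.

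The main obstacle I anticipate is bookkeeping rather than conceptual difficulty: one must be careful about the off-by-one shifts introduced by adding and removing endpoints, and about checking that \emph{every} morphism of $\mathcal{I}$ (order \emph{and} endpoint preserving) arises from precomposition, so that the hom-set bijections $F$ induces between $\operatorname{Hom}_{\Delta}([m],[n])$ and $\operatorname{Hom}_{\mathcal{I}}(F[n],F[m])$ are genuinely surjective and not merely injective. Establishing fully faithfulness of $F$ amounts to showing that a morphism of intervals $\operatorname{Hom}([n],\{1\})\to\operatorname{Hom}([m],\{1\})$ is uniquely precomposition by some $f\colon[m]\to[n]$; this follows by tracking where the generating ``cut'' maps go, but it is the step that requires genuine verification rather than formal nonsense.
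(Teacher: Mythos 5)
Your proposal is correct and follows essentially the same route as the paper: both directions of the duality are given by homming into the two-element object ($\operatorname{Hom}_{\Delta}(-,[1])$ and $\operatorname{Hom}_{\mathcal{I}}(-,\{1\})$ with the induced ordering), and the quasi-inverse property is verified by tracking where the ``cut'' maps go. The only cosmetic difference is that the paper arranges the constructions so that the composites are literally the identity on objects and checks equality of morphisms via the relation $i\leq G(g)(j) \iff g(i)\leq j$, whereas you package the same verification as an evaluation (double-dualization) natural isomorphism.
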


\begin{proof}
We will prove the result by constructing explicitly the duality
between the two categories.

Let $G:\mathcal{I}\rightarrow Set$ be the functor
$Hom_{\mathcal{I}}(-,\{1\})$. Note that for each $n$, the set
$Hom_{\mathcal{I}}(\{n\},\{1\})$ is non-empty. We will show that
there is a canonical ordering of this set, such that it can be
regarded as an element of $\Delta$, and we will show that whit
this ordering we can think of $G$ as a functor from
$\mathcal{I}$ to $\Delta$.

We can define an ordering of the set
$Hom_{\mathcal{I}}(\{n\},\{1\})$ as follows:
if $f_1, f_2: \{n\}\rightarrow \{1\}$ are two different maps, we
declare that $f_1<f_2$ if and only if there exists $j$, such
that $f_1(j)<f_2(j)$. Regarded as a functor from $\mathcal{I}$
to $\Delta$, $G$ is defined on objects by $G(\{n\})=[n-1]$.

Let $g:\{n\}\rightarrow\{m\}$ be a morphism in $\mathcal{I}$.
Given the ordering of the hom sets described above,
$G(g):[m-1]\rightarrow[n-1]$ is the following morphism:
\[G(g)(i)=[g^{-1}\{0,1,...,i\}].\]
Note that $i\leq G(g)(j)$ if and only if $g(i)\leq j$.
Therefore, $G(g)$ is well defined order preserving map.

In the other direction, let $F:\Delta\rightarrow Set$ be the
functor $Hom_{\Delta}(-,[1])$. Using the same idea as before, we
see that for each $[n]$, there is a canonical ordering of the
set $Hom_{\Delta}([n] ,[1])$ (which must have two end points -
the constant map at 0 and the constant map at 1), so that we can
regard it as an element of $\mathcal{I}$, and all of the
morphisms induced by $F$ will be morphisms that preserve this
ordering and the end points. This will allow us to think of
$F$ as a functor from $\Delta$ to $\mathcal{I}$.

The functors $F$ and $G$ form a quasi inverse pair.
It is evident from the definitions of $F$ and $G$ that both
compositions $FG$ and $GF$ are identity on objects in
$\mathcal{I}$ and $\Delta$ respectively.
Let $g:\{n\}\rightarrow\{m\}$. Since $g(i)\leq g(i)$, one has
$i\leq G(g)(i)$ and $FG(g)(i)\leq g(i)$. Therefore $FG(g)=g$. A
similar argument shows that $GF(f)=f$.
\end{proof}

\begin{notation}
We write $SSet$ for the category of \textit{simplicial sets},
i.e. the category of functors
\[\Delta^{op}\rightarrow Set.\]
\end{notation}

\section{Endofunctors on the simplex category}
In this section we classify the endofunctors on the simplex
category $\Delta$.
Note that a  functor $T:\Delta\rightarrow\Delta$ induces a
functor $T^*: SSet\rightarrow SSet$ that sends each simplicial
set $X$ to the composition $X\circ T$.

\begin{example}\label{basis functors}
We will refer to the following functors as the \textit{basis}
endofunctors $\Delta\rightarrow\Delta$.
\begin{itemize}
\item The identity functor $Id$, that is identity on both
objects and morphisms;
\item The constant $0$ functor $C_0$ that sends all objects of
$\Delta$ to the object $[0]$, and all morphisms to the identity
$[0]$ morphism.
\item The opposite functor $Op$, that is identity on objects,
and if $f:[n]\rightarrow[m]$ is a morphism, than $Op(f):
[n]\rightarrow [m]$ is the following morphism:
\[Op(f)(k) = m - f(n - k).\] 
\end{itemize}
\end{example}

\begin{definition}
The \textit{concatenation bifunctor} $*:
\Delta\times\Delta\rightarrow \Delta$ is defined as follows:
\begin{itemize}
\item On objects $*([n],[m])=[n]*[m]=[n+m+1]$;
\item On morphisms $(f,g): ([n_1],[m_1])\rightarrow
([n_2],[m_2])$ we have that
\[*(f,g) = f*g:
[n_1+m_1+1]\rightarrow[n_2+m_2+1]\]
is the following map:

\[(f*g)(k)=\begin{cases} f(k) &\mbox{if } 0\leq k \leq n_1 \\
g(k) & \mbox{if } n_1<k\leq n_1+m_1+1. \end{cases} \]
\end{itemize}
\end{definition}

The concatenation bifunctor can be used to define an operation
`$+$' between endofuntors on $\Delta$ in the following way:
\begin{definition} Let $T_1$ and $T_2$ be endofunctors on the
simplex category.
The functor $[T_1+T_2: \Delta\rightarrow \Delta]$ is defined to
be the following composition:
\[\Delta\mathrel{\mathop{\longrightarrow}^{\mathrm{T_1\times
T_2}}}\Delta\times \Delta
\mathrel{\mathop{\rightarrow}^{\mathrm{*}}}\Delta.\]
\end{definition}

Let $\circ$ denote the standard composition of funtors. The
strange notation `$+$' is justified by the following lemma,
whose proof is easy.
 
\begin{lemma}\label{distribution}
Let $T_1$, $T_2$ and $T_3$ be endofunctors on $\Delta$, then
$$(T_1+T_2)\circ T_3=(T_1\circ T_3)+(T_2\circ T_3).$$
\end{lemma}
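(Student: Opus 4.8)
The plan is to prove the distributivity identity $(T_1+T_2)\circ T_3 = (T_1\circ T_3)+(T_2\circ T_3)$ by unwinding both sides according to the definitions of `$+$' and `$*$', and checking equality separately on objects and on morphisms. Both sides are endofunctors on $\Delta$, so it suffices to show they agree on every object $[n]$ and on every morphism $f\colon [n]\to[m]$.

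First I would compute both sides on objects. By definition, $(T_1+T_2)\circ T_3$ sends $[n]$ to $*\bigl(T_1(T_3[n]),\,T_2(T_3[n])\bigr)=T_1T_3[n]*T_2T_3[n]$. On the other side, $(T_1\circ T_3)+(T_2\circ T_3)$ sends $[n]$ to $*\bigl((T_1\circ T_3)[n],\,(T_2\circ T_3)[n]\bigr)=T_1T_3[n]*T_2T_3[n]$. These are literally the same expression, so the two functors agree on objects; if $T_3[n]=[p]$, $T_1[p]=[a]$, $T_2[p]=[b]$, then both send $[n]$ to $[a+b+1]$.

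Next I would check morphisms. Given $f\colon[n]\to[m]$, the composite $(T_1+T_2)\circ T_3$ sends $f$ to $(T_1+T_2)(T_3 f)=T_1(T_3 f)*T_2(T_3 f)$, whereas $(T_1\circ T_3)+(T_2\circ T_3)$ sends $f$ to $(T_1\circ T_3)(f)*(T_2\circ T_3)(f)=T_1(T_3 f)*T_2(T_3 f)$. Again these coincide as the very same concatenated map, using the definition of `$+$' on morphisms and the functoriality built into composition. Expanding the piecewise formula for $*$ confirms that both produce the map which applies $T_1(T_3 f)$ on the first block of indices and $T_2(T_3 f)$ on the second; since the block boundaries are determined by the objects, which we already matched, the piecewise descriptions are identical.

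The verification is entirely formal, so there is no genuine obstacle; the lemma holds essentially because composition with $T_3$ is applied \emph{before} the concatenation on both sides, and concatenation is defined componentwise. The only point requiring a small amount of care is to confirm that the index ranges in the piecewise definition of $f*g$ line up on the two sides, which follows immediately once we observe that both sides send $[n]$ to the same object. Thus the identity reduces to the observation that $*\bigl(T_1(-),T_2(-)\bigr)\circ T_3 = *\bigl(T_1(T_3(-)),T_2(T_3(-))\bigr)$, which is true by the associativity of functor composition applied in each coordinate of the bifunctor $*$.
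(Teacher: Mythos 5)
Your proof is correct: the paper omits the argument entirely (stating only that "the proof is easy"), and your direct verification on objects and morphisms — noting that both sides reduce to $T_1(T_3(-)) * T_2(T_3(-))$ by unwinding the definition of `$+$' as $* \circ (T_1\times T_2)$ — is exactly the routine check the author had in mind.
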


In the Main Theorem of this section, Theorem \ref{Main Theorem},
we will show that all endofunctors on $\Delta$ can be
represented as a sum under `$+$' of the basis functors defined
in Example \ref{basis functors}. We classify the endofunctors on
$\Delta$ by studying the functors that they induce on $SSet$. Below we will prove that any functor
$T^*: SSet\rightarrow SSet$ that is induced by a functor
$T:\Delta\rightarrow\Delta$, is determined by its value on the
simplicial set $\Delta^1$.
Then, we will examine the relation between $T_1^*(\Delta^1)$,
$T_2^*(\Delta^1)$ and $(T_1+T_2)^*(\Delta^1)$.

 Recall from Section 2 that the categories $\Delta$ and $\mathcal{I}$ are dual. Let \[[\mathcal{L}: \mathcal{I}\rightarrow Set]\] be the forgetful functor. Since $\Delta^1$ is naturally  the functor  $F: \Delta\rightarrow \mathcal{I}$, as constructed in the proof of \ref{duality}. It follows that $T^*(\Delta^1)=\mathcal{L} \circ F \circ T$.   Therefore, we can regard $T^*(\Delta^1)$ as a \emph{simplicial interval} with ordering of the vertices. 

Moreover, using the functors $F$ and $G$, as defined in the proof of Theorem 2.3,  we see that the maps:
\begin{figure}[H]
   \begin{center}
\begin{tikzpicture}\matrix (m) [matrix of math nodes, row sep=0.1em,
column sep=2em, text height=1.5 ex, text depth=0.25ex]
{ Fun(\Delta^{op},\Delta^{op} )& &Fun(\Delta^{op},\mathcal{I})& & Fun(\Delta^{op},\Delta^{op}) \\
T& &F\circ T& & G\circ F\circ T\simeq T \\};
\path[->]
(m-1-1) edge   (m-1-3)
(m-1-3) edge   (m-1-5);
\path[|->]
(m-2-1) edge   (m-2-3)
(m-2-3) edge   (m-2-5);
\end{tikzpicture}
\end{center}
 \end{figure}

 defines an  equivalence of categories. Let 
\[ev: Fun(SSet,SSet)\rightarrow Fun(\Delta^{op}, Set)\]
be the evaluation functor at $\Delta^1$. It factors trough $Fun(\Delta^{op}, \mathcal{I})$, as explained above.  Therefore we have that the following composition is injective

\begin{figure}[H]
   \begin{center}
\begin{tikzpicture}\matrix (m) [matrix of math nodes, row sep=0.1em,
column sep=2em, text height=1.5 ex, text depth=0.25ex]
{ Fun(\Delta^{op},\Delta^{op} )& &Fun(SSet,SSet)& & Fun(\Delta^{op}, \mathcal{I}) \\
T& & T^*& & T^*(\Delta^1)=F\circ T \\};
\path[->]
(m-1-1) edge   (m-1-3)
(m-1-3) edge   (m-1-5);
\path[|->]
(m-2-1) edge   (m-2-3)
(m-2-3) edge   (m-2-5);
\end{tikzpicture}
\end{center}
 \end{figure}

 We see that  an endofunctor $T^*:SSet\rightarrow SSet$ induced by a functor $T:\Delta\rightarrow\Delta$ can be recovered from its value at $\Delta^1$, by first recovering the functor $T=G(T^*(\Delta^1))$. We are thus reduced to classifying the endofunctors on the simplex category by studying the structure of the  simplicial interval $T^*(\Delta^1)$. 

\begin{observation} \label{conc}   Let $T_1$ and $T_2$ be endofunctors on $\Delta$. The simplicial interval induced by  $(T_1+T_2)$ is  the following composition:
 
  \[\Delta\xrightarrow{T_1\times T_2} \Delta\times \Delta \mathrel{\mathop{\rightarrow}^{\mathrm{*}}}\Delta\xrightarrow{Hom_{\Delta}(-,[1])}Set.\]
  
  Let $[n]$ be an object in $\Delta$, such that $T_1([n])=[\alpha]$ and $T_2([n])=[\beta]$. By construction $(T_1+T_2)([n])=[\alpha+\beta+1]$. Hence,
  \[[T_1^*(\Delta^1)]_n=Hom_\Delta([\alpha],[1])\]
  \[[T_2^*(\Delta^1)]_n=Hom_\Delta([\beta],[1])\]
  \[[(T_1+T_2)^*(\Delta^1)]_n=Hom_\Delta([\alpha+\beta+1],[1]).\]

 The maps in $Hom_\Delta([\alpha+\beta+1],[1])$ are formed by concatenating the constant 0 map from $Hom_\Delta([\alpha],[1])$ with any map from $Hom_\Delta([\beta],[1])$, or by concatenating any  map from  $Hom_\Delta([\alpha],[1])$ with the constant 1 map from $Hom_\Delta([\beta],[1])$. Note that in this description the concatenation of the constant 0 map from $Hom_\Delta([\alpha],[1])$ with the constant 1 map from $Hom_\Delta([\beta],[1])$ appears twice. It follows that for any $n$, the $n$-th simplices of $(T_1+T_2)^*(\Delta^1)$ are the union of the $n$-th simplices of $T_1^*(\Delta^1)$ and $T_2^*(\Delta^1)$, where the last simplex of $T_1^*(\Delta^1)$ is identified with the first simplex of $T_2^*(\Delta^1)$. 
 
We conclude the following lemma:
 \end{observation}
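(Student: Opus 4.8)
The plan is to upgrade the levelwise description obtained in Observation \ref{conc} to an honest isomorphism of simplicial intervals. Observation \ref{conc} already produces, for every object $[n]$ with $T_1([n])=[\alpha]$ and $T_2([n])=[\beta]$, a bijection between $Hom_\Delta([\alpha+\beta+1],[1])$ and the union of $Hom_\Delta([\alpha],[1])$ and $Hom_\Delta([\beta],[1])$ after identifying the single overlapping map that is constant $0$ on the first block and constant $1$ on the second. So the work that remains is to organize the target of this bijection as a \emph{concatenation of simplicial intervals} and to promote the family of bijections to a natural isomorphism, i.e. one compatible with the simplicial structure maps and with the interval (endpoint) structure.

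First I would make the combinatorial decomposition precise: every order preserving $h:[\alpha+\beta+1]\to[1]$ is determined by its threshold, and restricting $h$ to the first block $\{0,\dots,\alpha\}$ and to the second block $\{\alpha+1,\dots,\alpha+\beta+1\}$ of $[\alpha]*[\beta]$ recovers $h$ completely. The two families of the Observation correspond to the threshold lying in the first or in the second block, overlapping precisely in the map that is constant $0$ on the first block and constant $1$ on the second. Since this overlapping map is an endpoint of each of $Hom_\Delta([\alpha],[1])$ and $Hom_\Delta([\beta],[1])$, and the counts match ($(\alpha+2)+(\beta+2)-1=\alpha+\beta+3$), the union is exactly the two intervals glued along a shared endpoint. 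I would then define the concatenation of simplicial intervals by performing this levelwise gluing and check that the structure maps descend to it; this well-definedness is where the hypothesis that the maps in $Fun(\Delta^{op},\mathcal{I})$ preserve endpoints is used, so that the identification is respected dimension by dimension.

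The heart of the argument is naturality. Given a morphism $\theta:[m]\to[n]$, the definition of $T_1+T_2$ gives $(T_1+T_2)(\theta)=T_1(\theta)*T_2(\theta)$, and by the definition of the concatenation bifunctor this morphism carries the first block into the first block and the second block into the second block. Hence precomposing a map $[\alpha+\beta+1]\to[1]$ with $(T_1+T_2)(\theta)$ and then restricting to the two blocks agrees with first restricting and then precomposing with $T_1(\theta)$ and $T_2(\theta)$ separately. This says exactly that the levelwise bijections assemble into a map of simplicial sets, and since each component is a bijection it is automatically an isomorphism of simplicial intervals. I would treat the overlapping simplex with a little extra care, since it is the one place the two blocks interact, verifying that the structure maps send it to the overlapping simplex of the appropriate dimension rather than into the interior of one of the two pieces.

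The step I expect to be most delicate is pinning down the \emph{orientation} of the gluing: which endpoint of $T_1^*(\Delta^1)$ is identified with which endpoint of $T_2^*(\Delta^1)$. This forces one to carry the ordering on the hom-sets from the proof of Theorem \ref{duality} through the block decomposition, keeping track of the fact that the constant $0$ map is the initial vertex and the constant $1$ map is the terminal vertex of each interval, and of how the threshold of $h$ translates into the position of its restrictions. Once the ordering is tracked correctly everything else is a routine unwinding of the definitions of $*$ and of the induced functor $T^*$.
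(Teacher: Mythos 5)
Your proposal follows essentially the same route as the paper: you decompose each order-preserving map $[\alpha+\beta+1]\to[1]$ by restricting to the two blocks of $[\alpha]*[\beta]$, observe that the two resulting families overlap in exactly the single map that is constant $0$ on the first block and constant $1$ on the second, and conclude that levelwise the $n$-simplices of $(T_1+T_2)^*(\Delta^1)$ are those of $T_1^*(\Delta^1)$ and $T_2^*(\Delta^1)$ glued along that shared endpoint. The only difference is that you spell out the naturality and orientation checks that the paper leaves implicit in the phrase ``by construction,'' which is a reasonable elaboration rather than a different argument.
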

\begin{lemma} If $T_1$ and $T_2$ are endofunctors on $\Delta$, then 
 \[(T_1+T_2)^*(\Delta^1)=T_1^*(\Delta^1)\vee T_2^*(\Delta^1).\]
 \end{lemma}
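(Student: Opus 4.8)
The plan is to upgrade the levelwise bijections recorded in Observation \ref{conc} to an isomorphism of simplicial intervals. Recall that for simplicial intervals the wedge $X\vee Y$ is obtained by gluing the terminal vertex of $X$ to the initial vertex of $Y$; at each level $n$ it is the pushout of the two sets $X_n$ and $Y_n$ in which the distinguished simplex of $X_n$ is identified with the distinguished simplex of $Y_n$. Writing $T_1([n])=[\alpha]$ and $T_2([n])=[\beta]$ as in the observation, a monotone map $h\colon[\alpha+\beta+1]\to[1]$ is determined by its restrictions to the two blocks $\{0,\dots,\alpha\}\cong[\alpha]$ and $\{\alpha+1,\dots,\alpha+\beta+1\}\cong[\beta]$, and Observation \ref{conc} shows precisely that this restriction assignment is a bijection $\phi_n$ onto $(T_1^*(\Delta^1)\vee T_2^*(\Delta^1))_n$, the only coincidence being the map that is constant $0$ on the first block and constant $1$ on the second. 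Thus for every $n$ we already have the required bijection.

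First I would make $\phi_n$ explicit: send $h$ to the simplex $h|_{\{0,\dots,\alpha\}}$ of $T_1^*(\Delta^1)$ when $h$ is constant $1$ on the second block, and to the simplex $h|_{\{\alpha+1,\dots,\alpha+\beta+1\}}$ of $T_2^*(\Delta^1)$ when $h$ is constant $0$ on the first block; by the observation these two cases are exhaustive and overlap only in the distinguished simplex. The substantive step is then to check that the family $\{\phi_n\}$ is natural in $[n]\in\Delta^{op}$, i.e. commutes with the maps induced by morphisms of $\Delta$. For $\theta\colon[m]\to[n]$ the structure map of $T^*(\Delta^1)$ is precomposition $h\mapsto h\circ(T_1+T_2)(\theta)$, and by definition of the operation `$+$' we have $(T_1+T_2)(\theta)=T_1(\theta)*T_2(\theta)$. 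The key point is that the concatenation bifunctor respects the block decomposition: $T_1(\theta)*T_2(\theta)$ carries the first block into the first block via $T_1(\theta)$ and the second block into the second block via $T_2(\theta)$. Consequently precomposition with it preserves the property of being constant $0$ on the first block and of being constant $1$ on the second, and on each summand it restricts to exactly the structure map of $T_1^*(\Delta^1)$, respectively $T_2^*(\Delta^1)$; hence $\phi_n$ commutes with all structure maps.

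The one point that needs care, and the only possible obstacle, is the compatibility of the gluing: one must verify that under every structure map the identified simplex is sent to the identified simplex, so that the two families of bijections genuinely descend to the pushout. This holds because the constant $0$ and constant $1$ maps into $[1]$ are preserved under precomposition with any order preserving map, so the distinguished simplex at level $n$ is carried to the distinguished simplex at level $m$. Assembling the natural bijections $\phi_n$ therefore yields the desired identification $(T_1+T_2)^*(\Delta^1)=T_1^*(\Delta^1)\vee T_2^*(\Delta^1)$ of simplicial intervals.
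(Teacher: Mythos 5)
Your proof is correct and takes essentially the same route as the paper, whose own argument for this lemma is just the levelwise description of $Hom_\Delta([\alpha+\beta+1],[1])$ recorded in Observation \ref{conc}. You go a step further than the paper by explicitly checking naturality of the levelwise bijections (via the block decomposition under $T_1(\theta)*T_2(\theta)$) and compatibility with the gluing of the distinguished simplices, details the paper leaves implicit.
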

\begin{notation}
Let $T$ be an endofunctor on $\Delta$ and $[n]\in \Delta$, then  $$T(n):=\#T([n])-1.$$
\end{notation}

\begin{lemma}\label{dimsim}
Let $T$ be an endofunctor on $\Delta$ that is not a constant functor. For all objects $[k]\in\Delta$ \[k\leq T(k).\]
\end{lemma}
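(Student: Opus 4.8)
The plan is to pass from the functor $T$ to the cardinalities $a_n:=\#T([n])$, so that the desired inequality $k\le T(k)$ becomes the assertion $a_k\ge k+1$. First I would record two structural features of $\Delta$: every monomorphism splits (an injective order-preserving $f\colon[n]\hookrightarrow[m]$ admits the retraction $y\mapsto\max\{x:f(x)\le y\}$, with value $0$ when $y<f(0)$), and dually every epimorphism splits (a surjection admits the section $y\mapsto\min f^{-1}(y)$). Since split monomorphisms and split epimorphisms are preserved by \emph{every} functor, $T$ preserves monomorphisms and epimorphisms. Applying $T$ to an inclusion $[n]\hookrightarrow[n+1]$ then produces an injection $T([n])\to T([n+1])$, so $a_n$ is non-decreasing, and $a_0\ge 1$ because the objects of $\Delta$ are nonempty. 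Consequently it suffices to prove the \emph{strict} inequality $a_n<a_{n+1}$ for every $n$: an immediate induction then yields $a_k\ge a_0+k\ge k+1$.

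I would establish strict monotonicity in contrapositive form, showing that a single coincidence $a_m=a_{m+1}$ forces $T$ to be constant. The first step is local and uses the rigidity of $\Delta$. If $a_m=a_{m+1}$, then every coface $\delta\colon[m]\to[m+1]$ is sent by $T$ to a monomorphism between two objects of equal finite cardinality, hence to an isomorphism; as the only automorphism of an object of $\Delta$ is the identity, any two such isomorphisms differ by an automorphism and therefore coincide. Thus all cofaces $[m]\to[m+1]$ have the same image under $T$, and dually so do all codegeneracies $[m+1]\to[m]$. The endgame is then clean once this collapse is known at \emph{every} level: if $a_n=\alpha+1$ is constant in $n$, then at each level $T$ sends all cofaces and all codegeneracies to $\mathrm{id}_{[\alpha]}$, and since these elementary maps generate $\Delta$, $T$ sends every morphism to $\mathrm{id}_{[\alpha]}$, that is, $T$ is the constant functor, contradicting the hypothesis.

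The crux, which I expect to be the main obstacle, is therefore the \emph{propagation}: upgrading the single equality $a_m=a_{m+1}$ to the statement that $a_n$ is constant in $n$. The coincidence of cofaces does spread upward through the cosimplicial identities $\delta^{j}\delta^{i}=\delta^{i}\delta^{j-1}$ (for $i<j$): since $T$ identifies all cofaces $[m]\to[m+1]$, these relations force $T$ to identify all cofaces $[m+1]\to[m+2]$ to a single monomorphism $\psi$, and dually all codegeneracies $[m+2]\to[m+1]$ to a single retraction $\chi$ with $\chi\psi=\mathrm{id}$. The delicate point is to rule out $a_{m+2}>a_{m+1}$, i.e.\ to show that $\psi$ is onto; the elementary simplicial relations by themselves are consistent with $\psi$ being a proper inclusion, so a global input is genuinely needed here. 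To supply it I would use the injectivity of the assignment $T\mapsto T^*(\Delta^1)=F\circ T$ and the reconstruction $T=G(T^*(\Delta^1))$ established above: the collapse identifies the simplicial interval $T^*(\Delta^1)$ with the one induced by a constant functor, whence $T$ itself is constant by injectivity. Making this comparison precise, equivalently showing directly that a simplicial interval no edge of which joins its two distinct endpoints must be the constant interval, is the technical heart of the argument.
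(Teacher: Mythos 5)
Your opening reduction is correct and is, in substance, the paper's entire proof: every coface $d^i$ is split by a codegeneracy ($s^i\circ d^i=\mathrm{id}$), any functor preserves split monomorphisms, hence $a_n=\#T([n])$ is non-decreasing; the paper packages the same observation as a minimal-counterexample argument, deducing from a least $k$ with $T(k)<k$ that some $T(d^i)$ fails to be injective. But your proof is not complete. You correctly reduce the lemma to strict monotonicity, correctly observe that a plateau $a_m=a_{m+1}$ forces all $T(d^i)$ and $T(s^j)$ between levels $m$ and $m+1$ to be the identity, and correctly note that the cosimplicial identities alone do not exclude $a_{m+1}<a_{m+2}$ afterwards --- and then you stop. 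The propagation of the single coincidence $a_m=a_{m+1}$ to global constancy of $T$, which you yourself call ``the technical heart of the argument,'' is asserted rather than proved: the collapse at levels $m,m+1$ does not by itself identify $T^*(\Delta^1)$ with the simplicial interval of a constant functor, since a priori $T^*(\Delta^1)$ could still grow in higher simplicial degrees, and that non-growth is precisely the content of the lemma. So the proposal has a genuine gap at its central step.

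That said, you have located a real soft spot, because the paper's own proof elides the same case. Minimality of $k$ only yields $T(k-1)\ge k-1$, not the strict inequality $T(k-1)>k-1$ that the paper invokes; in the borderline situation $T(k-1)=T(k)=k-1$ the map $T(d^i)$ is a bijection and no contradiction with split injectivity arises --- this is exactly your plateau. A complete argument does require the extra global input you gesture at (for instance, an analysis of which simplicial intervals can occur as $T^*(\Delta^1)$ that does not presuppose this lemma), but as written neither the paper's short proof nor your proposal supplies it; yours at least makes the missing step explicit.
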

\begin{proof}
For the sake of contradiction, assume that $T(k)<k$ for some positive integer $k$. Without loss of generality, assume that $k$ is the smallest integer such that $T(k)<k$. Obviously, $k\neq 0$. Note that $T(k)<T(k-1)$, since by assumption
$T(k-1)>k-1$ and $T(k)<k$. Therefore, the maps $T(d_i): T([k-1])\rightarrow T([k])$ is not injective. But this is impossible since $T(d_i)\circ T(s_i)=id$. This contradicts  the assumption that $T(k)<k$.
\end{proof}

\begin{corollary}\label{simplicial interval dimention}
For any $k\geq2$, the simplicial set $T^*(\Delta^1)$ has no non-degenerate $k$-simplices. 
\end{corollary}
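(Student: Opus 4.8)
The plan is to identify the simplices of $T^*(\Delta^1)$ explicitly and then reduce the statement to a purely combinatorial separation property of the codegeneracies. Since $T^*(\Delta^1)=\mathcal L\circ F\circ T$, its $k$-simplices are the order-preserving maps $T[k]\to[1]$; writing $T[k]=[T(k)]$, each such map is a \emph{threshold map} $f_t$ determined by a cut point $t\in\{0,1,\dots,T(k)+1\}$, with $f_t(x)=0$ for $x<t$ and $f_t(x)=1$ for $x\ge t$. The two extreme values $t=0$ and $t=T(k)+1$ are the constant maps, which factor through every codegeneracy and are therefore degenerate once $k\ge1$; if $T$ is constant then $T^*(\Delta^1)=\Delta^0$ and there is nothing to prove, so I assume $T$ non-constant and may invoke Lemma \ref{dimsim}. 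As in the proof of that lemma, the relation $s_id_i=\mathrm{id}$ forces each $T(s_i)$ to be a split epimorphism, hence a surjection $[T(k)]\to[T(k-1)]$.

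Next I would translate degeneracy into separation. The degeneracy operator indexed by the codegeneracy $s_i\colon[k]\to[k-1]$ sends $g$ to $g\circ T(s_i)$, so $f_t$ is degenerate exactly when it factors through some $T(s_i)$. For a monotone surjection $q$, the map $f_t$ factors through $q$ if and only if $f_t$ is constant on the fibres of $q$; since $f_t$ changes value only across the pair $\{t-1,t\}$, this holds precisely when $t-1$ and $t$ lie in the \emph{same} fibre, i.e. when $q$ does \emph{not} separate them. Hence the corollary is equivalent to the assertion that, for $k\ge2$, every consecutive pair $(t-1,t)$ of $[T(k)]$ is separated by at least one of $T(s_0),\dots,T(s_{k-1})$; equivalently, the monotone map $h=\sum_{i=0}^{k-1}T(s_i)\colon[T(k)]\to\Z$ is strictly increasing, since $h(t)-h(t-1)$ counts the codegeneracies that keep $t-1$ and $t$ apart.

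The heart of the argument is this joint-separation property, and I expect it to be the main obstacle. In $\Delta$ the codegeneracies $s_0,\dots,s_{k-1}$ do separate every pair of $[k]$ once $k\ge2$, because $s_i$ identifies only $i$ with $i+1$ and a second index is always available. To transport this through $T$ I would use the cofaces $d_i,d_{i+1}$, which are the two sections of $s_i$ in $\Delta$ and which, after applying $T$, I would show pick out the two ends of each fibre of the surjection $T(s_i)$; the pairs separated by $T(s_i)$ are then exactly the consecutive pairs realized at these fibre-endpoints, and one must check, using the dimension bookkeeping $k\le T(k)$ and $k-1\le T(k-1)$ from Lemma \ref{dimsim}, that running over $i$ exhausts all $T(k)$ consecutive pairs of $[T(k)]$. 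The difficulty is genuine: a functor need not preserve the joint injectivity of a family of maps, and the images of the several sections (or of the vertices $T\circ v_l$) can overlap, as already happens for functors built from the constant part $C_0$. Consequently the covering of all consecutive pairs cannot be obtained by a crude count of $kT(k-1)$ against $T(k)$, and must be read off from the fibre-endpoint structure of the $T(s_i)$ rather than from cardinalities alone.

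I would close with one cautionary remark justifying that the proof is run for every $k\ge2$ simultaneously rather than reduced to the case $k=2$. A simplicial set can have all of its $(k{-}1)$-simplices degenerate while still carrying a non-degenerate $k$-simplex — simplicial spheres behave exactly this way — so vanishing of non-degenerate simplices in one dimension does not formally propagate to the next. The separation statement above, however, is uniform in $k$: once established it rules out non-degenerate $k$-simplices in all dimensions $k\ge2$ at once, which is precisely what the corollary requires.
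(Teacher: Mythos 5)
Your reduction is correct and carefully done: writing $X=T^*(\Delta^1)$, the $k$-simplices of $X$ are indeed the threshold maps $f_t\colon T[k]\to[1]$, the constant ones are degenerate for $k\ge 1$, and for $1\le t\le T(k)$ the simplex $f_t$ is degenerate exactly when some $T(s_i)$ separates $t-1$ from $t$; so the corollary is equivalent to the joint-separation property you isolate (equivalently, to the degeneracy maps $X_{k-1}\to X_k$ being \emph{jointly} surjective for $k\ge 2$). But that equivalence is where your argument stops rather than where it finishes. You state the separation property, call it ``the main obstacle,'' sketch a strategy via the fibre endpoints $T(d_i),T(d_{i+1})$ of $T(s_i)$, and explicitly decline to carry out the check that the consecutive pairs so obtained exhaust all $T(k)$ consecutive pairs of $[T(k)]$. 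Since the separation property \emph{is} the corollary, nothing has been proved beyond a (correct and genuinely useful) reformulation; the inequality $k\le T(k)$ from Lemma \ref{dimsim} cannot close the gap on its own, as you yourself observe when ruling out the crude cardinality count, so some real input about how $T$ interacts with the cofaces and codegeneracies is still missing. This is a genuine gap, not a presentational one.

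For comparison, the paper's own proof is a two-line deduction from Lemma \ref{dimsim}: from $k\le T(k)$ it asserts that all maps $T^*(\Delta^1)_k\to T^*(\Delta^1)_{k-1}$ are surjective and concludes. Your reformulation in fact makes visible exactly what that argument leaves implicit: surjectivity of the face maps $X_k\to X_{k-1}$ does not by itself preclude nondegenerate $k$-simplices (the face maps $(\Delta^2)_2\to(\Delta^2)_1$ are surjective, yet $\Delta^2$ has a nondegenerate $2$-simplex); what is actually needed is the joint surjectivity of the degeneracies $X_{k-1}\to X_k$, i.e.\ precisely your separation statement. So you have located the right question more sharply than the paper does, but until you supply an actual proof that for $k\ge 2$ every consecutive pair of $[T(k)]$ is separated by some $T(s_i)$, the proposal does not establish the corollary.
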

\begin{proof}
If $T$ is a constant functor, the statement of the corollary is obvious. Suppose that $T$ is not a constant functor. By \ref{dimsim}, $k\leq T(k)$ for all $[k]\in \Delta$.  Therefore, all maps $T^*(\Delta^1)_k\rightarrow T^*(\Delta^1)_{k-1}$ are surjective. Hence, $T^*{\Delta^1}$ has no non-degenerate $k$-simplices for $k\geq 2$. 
\end{proof}
\begin{lemma}\label{interval}
Let $T:\Delta\rightarrow \Delta$, such that $T([0])=[n]$. The simplicial interval induced by T is  a pointed  union of  $n-1$ elements among \[\Delta^0\sqcup\Delta^0\text{, } \Delta^1,\text{and } [\Delta^1]^{op}.\] 
\end{lemma}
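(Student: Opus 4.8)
The plan is to read the wedge decomposition off the combinatorics of the simplicial interval $X:=T^*(\Delta^1)=\mathcal{L}\circ F\circ T$ directly. Since $T([0])=[n]$, its set of vertices $X_0=Hom_\Delta([n],[1])$ is an interval with exactly $n+2$ elements, which I list in order as $v_0<v_1<\dots<v_{n+1}$; the extreme ones $v_0,v_{n+1}$ are its two endpoints (the constant maps at $0$ and at $1$). By Corollary \ref{simplicial interval dimention}, $X$ has no non-degenerate simplices in dimension $\ge 2$, so it is determined by its vertices together with its non-degenerate edges. It therefore suffices to locate these edges and to check that $X$ is a pointed union of one atom from the list $\Delta^0\sqcup\Delta^0,\ \Delta^1,\ [\Delta^1]^{op}$ for each of the consecutive pairs $\{v_i,v_{i+1}\}$.

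Here I would use that $X$ is a genuine \emph{simplicial interval}, i.e.\ a functor to $\mathcal{I}$: the two face maps $d_0,d_1\colon X_1\to X_0$ (given by $e\mapsto e\circ T(d_0)$ and $e\mapsto e\circ T(d_1)$) and the degeneracy $s_0\colon X_0\to X_1$ (given by $v\mapsto v\circ T(s_0)$) are order- and endpoint-preserving. Because $s_0$ is a monotone section of both $d_0$ and $d_1$, the degenerate edges appear in order as $s_0(v_0)<\dots<s_0(v_{n+1})$, and $s_0(v_0),s_0(v_{n+1})$ are the endpoints of the interval $X_1$. Hence every non-degenerate edge $e$ lies strictly between two consecutive degenerate edges, $s_0(v_i)<e<s_0(v_{i+1})$; applying monotonicity of $d_0$ and of $d_1$ to these inequalities and using $d_0 s_0=d_1 s_0=\mathrm{id}$ forces both endpoints $d_0(e),d_1(e)$ into $\{v_i,v_{i+1}\}$. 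The same two monotonicities force all non-degenerate edges in a single gap to point the same way, since an edge $v_i\to v_{i+1}$ lying below an edge $v_{i+1}\to v_i$ in $X_1$ (or conversely) would violate the monotonicity of one of the two face maps.

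The hard part is to show that each gap contains \emph{at most one} non-degenerate edge; this is the rigidity of endofunctors of $\Delta$ on which the whole classification rests. I would reformulate it as injectivity of $(d_0,d_1)\colon X_1\to X_0\times X_0$, which in turn follows from the claim that the two endpoint inclusions $T(d_0),T(d_1)\colon T([0])\to T([1])$ are \emph{jointly surjective}, $\mathrm{im}\,T(d_0)\cup\mathrm{im}\,T(d_1)=T([1])$: granting this, an edge $e\colon T([1])\to[1]$ is determined by $e\circ T(d_0)$ and $e\circ T(d_1)$, so $(d_0,d_1)$ is injective; together with the fixed orientation from the previous step this leaves at most one non-degenerate edge in each gap and rules out non-degenerate loops. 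Equivalently the claim is the numerical bound $\#T([1])\le 2\,\#T([0])$, i.e.\ that every fibre of the split surjection $T(s_0)\colon T([1])\to T([0])$ consists of just its two marked points $T(d_0),T(d_1)$. This is the one step that needs input beyond the interval formalism, because an endofunctor of $\Delta$ need not respect the pointwise order on hom-sets; I expect to prove it by functoriality, ruling out an element of $T([1])$ outside both images by testing it against the codegeneracy $s_0\colon[1]\to[0]$ and the coface maps $[1]\to[2]$, i.e.\ by transporting the fact that $d_0,d_1$ are jointly epimorphic and split in $\Delta$.

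Granting the single-edge bound, each of the $n+1$ gaps contributes exactly one atom: $\Delta^1$ when there is a forward edge $v_i\to v_{i+1}$, its opposite $[\Delta^1]^{op}$ when the edge points $v_{i+1}\to v_i$, and $\Delta^0\sqcup\Delta^0$ when the gap is empty. Since neighbouring atoms meet exactly in the shared vertex $v_i$, the interval $X$ is the pointed union of these atoms in the sense of the preceding lemma, which is the desired conclusion. As a consistency check, naming the three atoms $Id$, $Op$, $C_0$ and invoking that wedge lemma recovers a presentation of $T$ as a sum under `$+$', anticipating the Main Theorem.
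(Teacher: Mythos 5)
Your reduction of the lemma to a statement about the edges of the interval $X=T^*(\Delta^1)$ is correct, and the parts you prove in full are fine: using that $s_0$ is an order- and endpoint-preserving section of $d_0$ and $d_1$, you correctly show that every non-degenerate edge has both faces in a consecutive pair $\{v_i,v_{i+1}\}$, and that two oppositely oriented edges cannot coexist in one gap. (You also silently use $n+1$ gaps rather than the ``$n-1$'' in the statement; that is the right count, consistent with Theorem \ref{Main Theorem}, so the discrepancy is in the paper's statement, not in your argument.) These steps are actually spelled out more carefully than in the paper's own proof, which simply asserts that the interval structure gives at most one directed edge between consecutive vertices.

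However, the step you yourself flag as ``the hard part'' --- that each gap contains at most one non-degenerate edge, which you reduce to the claim $\mathrm{im}\,T(d_0)\cup\mathrm{im}\,T(d_1)=T([1])$ --- is left as a statement of intent (``I expect to prove it by functoriality\dots''), and this is a genuine gap: it is exactly the point where some real input is needed. Note that the claim cannot be extracted from the data in simplicial degrees $\leq 1$ alone: the configuration $T([0])=[0]$, $T([1])=[2]$ with $T(d_1),T(d_0)$ hitting $0$ and $2$ satisfies all the relations among $d_0,d_1,s_0$ and produces two parallel non-degenerate edges with order-preserving, endpoint-preserving face and degeneracy maps. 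So the codegeneracy $s_0\colon[1]\to[0]$ will not suffice; you must, as you anticipate, bring in $[2]$. One way to close the gap along your lines: if $e<e'$ were two parallel non-degenerate edges in a gap, compute the faces of the degenerate $2$-simplices $s_0e,s_1e,s_0e',s_1e'$ using the simplicial identities ($d_2s_0=s_0d_1$, $d_0s_1=s_0d_0$, the rest are identities); order-preservation of $d_1\colon X_2\to X_1$ forces $s_1e<s_0e'$ in $X_2$, while order-preservation of $d_2\colon X_2\to X_1$ forces $s_0e'<s_1e$, a contradiction. Until some such argument is supplied, the proposal does not establish the lemma; with it supplied, your route is a legitimate (and more explicit) version of the paper's proof.
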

\begin{proof}
By Corollary \ref{simplicial interval dimention}, the simplicial set  $T^*(\Delta^1)$ has no nondegenerate $k$-simplices for $k\geq 2$.  Since $T^*(\Delta^1)$ has an interval structure, then there is at most one directed edge between two consecutive $0$-simplices. 

Suppose that $n=0$. Then $T^*(\Delta^1)$ has two $0$-simplices. If there is no edge between them, $T^*(\Delta^1)=\Delta^0\sqcup\Delta^0$.  If there is an edge from the first $0$-simplex to the second $0$-simplex, then $T^*(\Delta^1)=\Delta^1$.  If there is an edge from the second $0$-simplex to the first $0$-simplex, then $T^*(\Delta^1)=[\Delta^1]^{op}$.  

Similarly, for any $n>0$, $T^*(\Delta^1)$ has $n+2$ ordered $0$-simplices such that there is at most one directed edge between any two consecutive simplices. Therefore, $T^*(\Delta^1)$ is  a pointed  union of  $n-1$ elements among $\Delta^0\sqcup\Delta^0$, $\Delta^1$ and $[\Delta^1]^{op}$. Each of them connects two consecutive simplices. 
\end{proof}

\begin{corollary}\label{the zero functors}
The only endofunctors on $\Delta$  that send the object $[0]$ to itself are the basis functors.
\end{corollary}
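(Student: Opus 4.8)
The plan is to specialize Lemma \ref{interval} to the case $n=0$ and then identify the three possible simplicial intervals with the three basis functors, using the injectivity of the assignment $T\mapsto T^*(\Delta^1)$ established earlier in this section.

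First I would note that all three basis functors fix $[0]$: we have $Id([0])=[0]$ and $C_0([0])=[0]$ by definition, while $Op$ is the identity on objects, so $Op([0])=[0]$ as well. Thus the basis functors are among the endofunctors under consideration. Conversely, suppose $T$ is an endofunctor with $T([0])=[0]$, so that $n=0$ in the notation of Lemma \ref{interval}. Then $T^*(\Delta^1)$ has exactly two $0$-simplices, and by the $n=0$ case of that lemma it is one of $\Delta^0\sqcup\Delta^0$, $\Delta^1$, or $[\Delta^1]^{op}$.

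It remains to match each of these intervals with a basis functor. Using $T^*(\Delta^1)_k=Hom_\Delta(T([k]),[1])$, one computes directly that $Id^*(\Delta^1)=\Delta^1$ and $Op^*(\Delta^1)=[\Delta^1]^{op}$; and since $C_0$ sends every object to $[0]$ and every morphism to the identity of $[0]$, the simplicial set $C_0^*(\Delta^1)$ is constant on the two-element set $Hom_\Delta([0],[1])$ with all face and degeneracy maps the identity, i.e. $C_0^*(\Delta^1)=\Delta^0\sqcup\Delta^0$. This realizes all three intervals by basis functors. Since the assignment $T\mapsto T^*(\Delta^1)$ is injective (recall that $T$ is recovered as $T=G(T^*(\Delta^1))$), any $T$ with $T([0])=[0]$ is determined by which of the three intervals it produces, and hence coincides with the unique basis functor producing that interval.

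The only genuinely computational step — and the one I expect to be the main obstacle — is confirming that $C_0^*(\Delta^1)$ is the discrete interval $\Delta^0\sqcup\Delta^0$ rather than $\Delta^1$ or $[\Delta^1]^{op}$; this hinges on the fact that $C_0$ collapses every morphism to an identity, so that no nondegenerate edge survives. The identifications for $Id$ and $Op$ are immediate from the definitions, and the remainder of the argument is a bookkeeping application of the injectivity already in hand.
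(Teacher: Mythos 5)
Your proposal is correct and follows essentially the same route as the paper: specialize Lemma \ref{interval} to $n=0$ to get the three possible intervals, identify each with a basis functor, and invoke the injectivity of $T\mapsto T^*(\Delta^1)$. You simply make explicit the computations of $Id^*(\Delta^1)$, $Op^*(\Delta^1)$, $C_0^*(\Delta^1)$ and the appeal to injectivity that the paper leaves implicit.
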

\begin{proof}
Let $T:\Delta\rightarrow\Delta$ be a functor, such that $T([0])=[0]$.  By the previous lemma  $T^*(\Delta^1)$ is one of the following: \[\Delta^0\sqcup\Delta^0\text{, } \Delta^1,\text{or } [\Delta^1]^{op}.\]   If $T^*(\Delta^1)=\Delta^0\sqcup\Delta^0$, then $T=C_0$. If $T^*(\Delta^1)=\Delta^1$, then $T=Id$. If $T^*(\Delta^1)=[\Delta^1]^{op}$, then $T=Op$.  
\end{proof}

\begin{theorem}\label{Main Theorem}
 Let $T$ be an endofunctor on the simplex category such that  $T([0])=[n]$. Than $T$ is a sum of $n+1$ of the basis functors.
 \end{theorem}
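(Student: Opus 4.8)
The plan is to deduce the statement purely by assembling the structural results already in hand, using the fact that $T$ is recoverable from the single simplicial interval $T^*(\Delta^1)$. The skeleton is: decompose $T^*(\Delta^1)$ as an ordered wedge via the interval lemma, recognize each wedge summand as the value at $\Delta^1$ of a basis functor via Corollary \ref{the zero functors}, reassemble the summands into one iterated sum using the wedge lemma, and finally invoke injectivity of evaluation at $\Delta^1$ to transport the equality back to $\Delta$.

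First I would apply Lemma \ref{interval} (reading it through its proof): since $T([0])=[n]$, the interval $T^*(\Delta^1)$ has $n+2$ ordered $0$-simplices, and between each pair of consecutive vertices sits exactly one of the pieces $\Delta^0\sqcup\Delta^0$, $\Delta^1$, or $[\Delta^1]^{op}$; thus $T^*(\Delta^1)=P_1\vee P_2\vee\cdots\vee P_{n+1}$ is a pointed union of $n+1$ such pieces, glued at their shared endpoints. (If $T$ is constant the claim is immediate.) Next, for each index $i$ I would use Corollary \ref{the zero functors}, which records precisely the correspondence $C_0^*(\Delta^1)=\Delta^0\sqcup\Delta^0$, $Id^*(\Delta^1)=\Delta^1$, and $Op^*(\Delta^1)=[\Delta^1]^{op}$. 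This lets me pick a basis functor $B_i\in\{C_0,Id,Op\}$ with $B_i^*(\Delta^1)=P_i$, so that $T^*(\Delta^1)=\bigvee_{i=1}^{n+1}B_i^*(\Delta^1)$.

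The reassembly step uses the wedge lemma $(T_1+T_2)^*(\Delta^1)=T_1^*(\Delta^1)\vee T_2^*(\Delta^1)$ together with associativity of $+$, which is inherited from associativity of the concatenation bifunctor $*$ on objects and morphisms; an induction on the number of summands then yields $\bigvee_{i=1}^{n+1}B_i^*(\Delta^1)=(B_1+\cdots+B_{n+1})^*(\Delta^1)$. Hence $T$ and $B_1+\cdots+B_{n+1}$ agree at $\Delta^1$, and since evaluation at $\Delta^1$ is injective on endofunctors induced from $\Delta$ (one recovers $T=G(T^*(\Delta^1))$, as noted before Observation \ref{conc}), I conclude $T=B_1+\cdots+B_{n+1}$, a sum of $n+1$ basis functors. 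The step needing the most care is the bookkeeping in the induction: one must match the left-to-right order of the wedge summands $P_i$ with the order of the iterated sum, and verify that the endpoint shared by $P_i$ and $P_{i+1}$ is exactly the basepoint at which $\vee$ identifies them—i.e. the last vertex of $B_i^*(\Delta^1)$ with the first vertex of $B_{i+1}^*(\Delta^1)$, which is what the concatenation description in Observation \ref{conc} guarantees. Everything else is formal.
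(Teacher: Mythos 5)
Your proposal is correct and follows essentially the same route as the paper's own proof: decompose $T^*(\Delta^1)$ into $n+1$ pieces via Lemma \ref{interval}, match each piece to a basis functor, reassemble with the wedge lemma, and conclude by the injectivity of evaluation at $\Delta^1$. You merely make explicit the induction and the final appeal to injectivity, which the paper leaves implicit.
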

 \begin{proof}
 Consider the  induced functor $T^*: SSet\rightarrow SSet$. As we have shown, it is completely determined by its value on the simplicial set $\Delta^1$. 
 By assumption $T([0])=[n]$.  Hence,  it is a wedge of  $n+1$ of the following:  $\Delta^0\sqcup \Delta^0$, $\Delta^1$ and $(\Delta^1)^{op}$. \ref{interval}. 
  Recall  that \[(T_i+T_j)^*(\Delta^1)={T_i}^*(\Delta^1)\vee {T_j}^*(\Delta^1).\] By the previous lemma the only functors that sends $[0]$ to $[0]$ are the basis functors. We have that  $Id(\Delta^1)=\Delta^1$, $Op(\Delta^1)=(\Delta^1)^{op}$ and $C_0(\Delta^1)=\Delta^0\sqcup \Delta^0$. We conclude that $T$ is the sum of $n+1$ of the basis functors.
 \end{proof}

\section{Generalized  edgewise subdivisions}
In Section 3 we proved that each endofunctor on the simplex category can be expressed  as a finite sum of the three basis  functors. In this section we study the induced functors on the category of simpicial sets. Now let us classify the endofunctors on $SSet$ that are induced by endofunctors on $\Delta$ and preserve weak equivalences.  
Segal's edgewise subdivision is one example of such endofuntor. 

\begin{definition}
Let $\mathcal{E}$ denotes the  functor $Op+Id$.  \textit{Segal's edgewise subdivision} of a simplicial set $X$ is  the simplicial set  $\mathcal{E}(X)$\cite{segalsub}.   
\end{definition}

 Segal's edgewise subdivision of $\Delta^1$ and $\Delta^2$ are shown in Figure 1 and Figure 2. 

  \begin{figure}[H]
  \begin{center}
\begin{tikzpicture}\matrix (m) [matrix of math nodes, row sep=3em,
column sep=2em, text height=1.5 ex, text depth=0.25ex]
{ 00 & & 01 & & 11 \\ };
\path[->]
(m-1-1) edge   (m-1-3)
(m-1-5) edge  (m-1-3);
\end{tikzpicture}
\end{center}
\caption{Segal's edgewise subdivision of $\Delta^1$}
  \end{figure}

  \begin{figure}[H]\label{ss2}
  \begin{center}
\begin{tikzpicture}\matrix (m) [matrix of math nodes, row sep=2em,
column sep=2em, text height=1.5 ex, text depth=0.25ex]
{ &&11&  &  \\
& 01 & & 12 &\\
00 & & 02 & & 22 \\};
\path[->]
(m-1-3) edge   (m-2-2)
(m-1-3) edge   (m-2-4)
(m-1-3) edge   (m-3-3)
(m-3-1) edge   (m-2-2)
(m-3-1) edge   (m-3-3)
(m-3-5) edge   (m-3-3)
(m-3-5) edge   (m-2-4)
(m-2-2) edge   (m-3-3)
(m-2-4) edge   (m-3-3);
\end{tikzpicture}
\end{center}
 \caption{Segal's edgewise subdivision of $\Delta^2$}
  \end{figure}
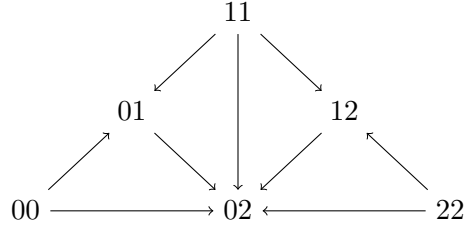

  Obviously, the constant $0$ functor does not induce a functor on $SSet$ that is weak equivalence preserving, while the identity and the opposite functors do. The main result of the section is that a functor on $\Delta$ induces a weak equivalence preserving functor on $SSet$ if and only if it does not contain the constant $0$ functor in its representation as a sum of the basis  functors. 

In his paper \cite{clar} C. Barwick  proves that a functor $T^*$ on the category of simplicial sets preserves weak equivalences (and in fact is a left Quillen functor) if and only if it carries the standard $n$-simplex to a  weakly contractible simplicial set. 

Note that the objects of $\Delta$ can be regarded as categories $[n]$ with objects all integers $i$,  $0\leq i\leq n$ and morphisms $id_i: i\rightarrow i$ and $\beta_i: i\rightarrow i+1$. We show that the category $[n]$ can be embedded in the category $[1]^n$ by a canonical  functor $\eta: [n]\rightarrow [1]^n$, such that there exists a collapsing functor $\mu: [1]^n\rightarrow[n]$ and $\mu\circ\eta = id$.
\begin{construction}
Let $\eta: [n]\rightarrow [1]^n$ be define on objects as follows
\[\eta (m)=\underbrace{0...0}_{n-m+1}\underbrace{1...1}_{m}.\]
If $\beta_i:i\rightarrow i+1$ is the unique morphism from $i$ to $i+1$  in $[n]$, then $\eta(\beta_i)$ is  the unique morphism  from $\underbrace{0...0}_{n-i+1}\underbrace{1...1}_{i}$ to $\underbrace{0...0}_{n-i}\underbrace{1...1}_{i+1}$ in $[1]^n$.

Suppose that $v$ is an object in $[1]^n$. Then $v$ is a sequence of $0$-s  and $1$-s of length $n$. Let $v_i$ denotes the $i$-th term of this sequence. Let $\mu: [1]^n\rightarrow [n]$ be define on objects as follows 
\[\mu (v)=n-\min_ {v_i=1} i+1.\]
Note that in both of the categories $[n]$ and $[1]^n$ if there is a morphism between two objects, it is unique. Therefore, the values of  $\mu$ on morphisms in $[1]^n$ is determined by its values on objects. 

It is evident that $\eta$ is a faithful functor and $\mu$ is a full functor. Moreover, it is easy to chek that $\mu\circ\eta$ is the identity functor on $[n]$. 
\end{construction}
With this, we construct natural transformations $\Phi: \Delta^n\rightarrow [\Delta^1]^n$ and $\Psi: [\Delta^1]^n\rightarrow \Delta^1$ using the functors $\eta$ and $\mu$.

\begin{construction}\label{construction}
First, we construct the map $\Phi:  \Delta^n\rightarrow [\Delta^1]^n$. Recall that $\Delta^n$ is the functor $Hom_\Delta(-,[n])$ and $\Delta^1$ is the functor $Hom_\Delta(-,[1])$. Note that giving $n$ maps $[m]\rightarrow[1]$ is the same as giving one map $[m]\rightarrow[1]^n$.  Therefore $Hom_\Delta([m],[1])^n=Hom_{Cat}([m],[1]^n)$, where $Cat$ is the category of small categories.  The components of the natural transformation $\Phi:  \Delta^n\rightarrow [\Delta^1]^n$ are maps
\[
\begin{array}{@{}r@{\;}r@{\;}c@{\;}l@{}}
    \Phi_m: & Hom_\Delta([m],[n])& \rightarrow & Hom_\Delta([m],[1])^n=Hom_{Cat}([m],[1]^n),   \\
       & f & \mapsto     & \eta\circ f.
  \end{array}
\]

Similarly, the components of the natural transformation $\Psi:  [\Delta^1]^n\rightarrow \Delta^n$ are maps

\[
\begin{array}{@{}r@{\;}r@{\;}c@{\;}l@{}}
    \Psi_m: & Hom_\Delta([m],[1]^n)& \rightarrow & Hom_\Delta([m],[n]),   \\
       & g & \mapsto     & \mu\circ g.
  \end{array}
\]
The naturality of $\Phi$ and $\Psi$ is easy to check.
Note that since $\mu\circ\eta =id$ by construction, then $\Phi\circ\Psi=id$.
\end{construction}

Next, we prove the following stronger result:

\begin{lemma}\label{delta1hom}
The induced endofunctor $T^*: SSet\rightarrow SSet$ on the  category of simplicial sets preserves weak equivalences if and only if it  carries the standard one-simplex $\Delta^1$ to a weakly contractible simplicial set. 
\end{lemma}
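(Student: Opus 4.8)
The statement is an equivalence, so I would prove the two implications separately; the work is entirely in the ``if'' direction, while the ``only if'' direction is a one-line consequence of the fact that $T^*$ preserves the terminal object. Concretely, $\Delta^0$ is the terminal simplicial set and $T^*$, being precomposition with $T^{op}$, preserves all limits (these are computed pointwise in $SSet$), so $T^*(\Delta^0)=\Delta^0$. Since $\Delta^1$ is weakly contractible, the unique map $\Delta^1\to\Delta^0$ is a weak equivalence; if $T^*$ preserves weak equivalences then $T^*(\Delta^1)\to T^*(\Delta^0)=\Delta^0$ is one as well, which says precisely that $T^*(\Delta^1)$ is weakly contractible.

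For the converse I would assume that $T^*(\Delta^1)$ is weakly contractible. By the theorem of Barwick recalled above \cite{clar}, it suffices to prove that $T^*(\Delta^n)$ is weakly contractible for every $n$, so the whole point is to transport contractibility from $\Delta^1$ to $\Delta^n$ via the machinery of Construction \ref{construction}. I would do this in two moves. First, because $T^*$ again preserves products, one has
\[
T^*\bigl([\Delta^1]^n\bigr)=\bigl(T^*(\Delta^1)\bigr)^n,
\]
and a finite product of weakly contractible simplicial sets is weakly contractible, so the right-hand side is weakly contractible. Second, Construction \ref{construction} supplies maps $\Phi\colon\Delta^n\to[\Delta^1]^n$ and $\Psi\colon[\Delta^1]^n\to\Delta^n$ with $\Psi\circ\Phi=\operatorname{id}_{\Delta^n}$ (this is exactly $\mu\circ\eta=\operatorname{id}$), exhibiting $\Delta^n$ as a retract of $[\Delta^1]^n$. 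Applying the functor $T^*$ yields $T^*(\Psi)\circ T^*(\Phi)=\operatorname{id}_{T^*(\Delta^n)}$, so $T^*(\Delta^n)$ is a retract of $T^*([\Delta^1]^n)$. Since a retract of a weakly contractible simplicial set is weakly contractible (the identity of $T^*(\Delta^n)$ factors on every homotopy group through that of a point), $T^*(\Delta^n)$ is weakly contractible, and Barwick's criterion finishes the converse.

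The only step that is not formal is the claim that a finite product of weakly contractible simplicial sets is again weakly contractible; this is the main obstacle, and the one place where one must invoke genuine homotopy theory of $SSet$ rather than pure category theory. I would justify it either by using that geometric realization preserves finite products, so that $\bigl|(T^*(\Delta^1))^n\bigr|\cong\bigl|T^*(\Delta^1)\bigr|^n$ is a finite product of contractible spaces and hence contractible, or, more model-theoretically, by the fact that the Kan--Quillen structure on $SSet$ is a Cartesian monoidal model structure in which every object is cofibrant, so that $\times$ preserves weak equivalences and the product of the $n$ weak equivalences $T^*(\Delta^1)\to\Delta^0$ is a weak equivalence $(T^*(\Delta^1))^n\to\Delta^0$.
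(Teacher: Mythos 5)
Your proof is correct, and in the ``if'' direction it takes a genuinely lighter route than the paper's. Both arguments funnel through Barwick's criterion and the maps $\Phi$, $\Psi$ of Construction \ref{construction}, and both use that $T^*$ preserves finite products, so that $T^*([\Delta^1]^n)=(T^*(\Delta^1))^n$ is weakly contractible. The difference is in how contractibility is transported to $T^*(\Delta^n)$. The paper upgrades $\Phi$ and $\Psi$ to a two-sided homotopy equivalence (via the natural transformation $\Gamma\colon \mathrm{id}\Rightarrow\eta\circ\mu$), applies $T^*$ to the homotopy $(\Delta^1)^n\times\Delta^1\to(\Delta^1)^n$, and argues that the resulting ``homotopy'' parametrized by $T^*(\Delta^1)$ still witnesses $T^*(\Phi)\circ T^*(\Psi)\simeq\mathrm{id}$ because $T^*(\Delta^1)$ is weakly contractible; this makes $T^*(\Phi)$ an honest weak equivalence but requires the slightly delicate point that a homotopy whose parameter object is merely weakly contractible (rather than $\Delta^1$) still identifies the two maps in the homotopy category. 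You sidestep all of that: you use only the strict identity $\Psi\circ\Phi=\mathrm{id}_{\Delta^n}$ (note the paper's Construction \ref{construction} misstates this as $\Phi\circ\Psi=\mathrm{id}$; your version is the correct one, coming from $\mu\circ\eta=\mathrm{id}$), which $T^*$ preserves on the nose, exhibiting $T^*(\Delta^n)$ as a retract of the weakly contractible $(T^*(\Delta^1))^n$; a retract of a weakly contractible object is weakly contractible, and Barwick's criterion finishes. This is shorter and isolates the one genuinely homotopical input, the product statement, which you justify correctly via geometric realization or the cartesian model structure. You also give an explicit one-line ``only if'' argument from $T^*(\Delta^0)=\Delta^0$, whereas the paper reads that direction off the ``if and only if'' in Barwick's theorem at $n=1$; either is fine. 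The trade-off is that the paper's longer argument yields slightly more, namely that $T^*(\Phi)$ itself is a weak equivalence $T^*(\Delta^n)\to (T^*(\Delta^1))^n$, but that extra strength is not needed for the lemma.
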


\begin{proof}
Given the result in \cite{clar}, it is enough to show that the map $T^*(\Phi):T^*(\Delta^n)\rightarrow [T^*(\Delta^1)]^n$ is a weak equivalence, where $\Phi$ is the map constructed in \ref{construction}. Note that $\Delta^n$ and $[\Delta^1]^n$ are the nerves of the categories $[n]$ and $[1]^n$ respectively. Hence, it is enough to show that the maps $\eta$ and $\mu$ form an inverse pair. By construction $\mu\circ\eta$ is the identity functor on $[n]$.  Note that for all objects $\alpha\in[1]^n$ there is a unique morphism
\[\Gamma_\alpha: \underbrace{...0}_{n-k}\underbrace{1...1}_{k}\rightarrow \underbrace{0...0}_{n-k}\underbrace{1...1}_{k}\]

The collection of the $\Gamma_\alpha$ maps defines a natural transformation $\Gamma$ from the identity functor on $[1]^n$ to the functor $\eta\circ\mu:[1]^n\rightarrow[1]^n$. We conclude that the map $\Psi$ is weak equivalence with homotopy inverce $\Phi$, as constructed in \ref{construction}.

The maps $\Phi$ and $\Psi$ induse the following maps:
\[T^*(\Delta^1)^n\rightarrow T^*(\Delta^n)\rightarrow T^*(\Delta^1)^n\]
We will show that $T^*(\Phi)\circ T^*(\Psi)$ is hopotopic to the identity. We have proved that  there exist homotopy from $\Phi\circ\Psi$ to the $id$ given by:

\begin{figure}[H]\label{hom1} 
  \begin{center}
\begin{tikzpicture}\matrix (m) [matrix of math nodes, row sep=1.5em,
column sep=1em, text height=1.5 ex, text depth=0.25ex]
{ (\Delta^1)^n\times \{0\}& & \\
 &(\Delta^1)^n\times\Delta^1  & (\Delta^1)^n\\
(\Delta^1)^n\times \{1\}& & \\};
\path[right hook->] (m-1-1) edge   (m-2-2);
\path[right hook->] (m-3-1) edge   (m-2-2);
\path[->] (m-2-2) edge   (m-2-3)
(m-1-1) edge [bend left=10] node[above] {$\Phi\circ\Psi$} (m-2-3)
(m-3-1) edge [bend right=10] node[below] {$id$} (m-2-3);
\end{tikzpicture}
\end{center}
  \end{figure}

Note that $T^*$ preserves products, therefore the above diagram becomes:  

\begin{figure}[H]\label{hom2} 
  \begin{center}
\begin{tikzpicture}\matrix (m) [matrix of math nodes, row sep=1.5em,
column sep=1em, text height=1.5 ex, text depth=0.25ex]
{ T^*(\Delta^1)^n & & \\
 &T^*(\Delta^1)^n\times T^*(\Delta^1)  & T^*(\Delta^1)^n\\
T^*(\Delta^1)^n & & \\};
\path[right hook->] (m-1-1) edge   (m-2-2);
\path[right hook->] (m-3-1) edge   (m-2-2);
\path[->] (m-2-2) edge   (m-2-3)
(m-1-1) edge [bend left=10] node[above] {$T^*(\Phi) \circ T^*(\Psi)$} (m-2-3)
(m-3-1) edge [bend right=10] node[below] {$id$} (m-2-3);
\end{tikzpicture}
\end{center}
  \end{figure}

But if $T^*(\Delta^1)$ is weakly contractible the last defines a  homotopy from $T^*(\Phi)\circ T^*(\Phi)$ as desired. 
\end{proof}

\begin{lemma}\label{sumpreserving}
The sum of two weak equivalence preserving endofunctors on $SSet$ is a weak equivalence preserving functor.
\end{lemma}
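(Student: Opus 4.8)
The plan is to reduce the statement to the weak contractibility of a single simplicial set and then to invoke the gluing lemma. Interpret the sum of two weak-equivalence-preserving endofunctors as follows: let $T_1,T_2$ be endofunctors on $\Delta$ whose induced functors $T_1^*,T_2^*$ preserve weak equivalences, and consider the functor $(T_1+T_2)^*$ induced by $T_1+T_2$. By Lemma \ref{delta1hom}, $(T_1+T_2)^*$ preserves weak equivalences if and only if $(T_1+T_2)^*(\Delta^1)$ is weakly contractible, so the entire problem collapses to the analysis of this one simplicial set.

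First I would apply the wedge lemma recorded after Observation \ref{conc} to rewrite
\[(T_1+T_2)^*(\Delta^1)=T_1^*(\Delta^1)\vee T_2^*(\Delta^1).\]
By hypothesis $T_1^*$ and $T_2^*$ preserve weak equivalences, so Lemma \ref{delta1hom}, applied to each separately, tells us that both $T_1^*(\Delta^1)$ and $T_2^*(\Delta^1)$ are weakly contractible. Thus the task becomes: show that the wedge, i.e.\ the pointed union along a single shared vertex, of two weakly contractible simplicial sets is again weakly contractible.

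For this last step I would present the wedge as the pushout of the span $T_1^*(\Delta^1)\leftarrow \Delta^0\rightarrow T_2^*(\Delta^1)$, where $\Delta^0$ is the common basepoint vertex. Since every monomorphism of simplicial sets is a cofibration in the Kan--Quillen model structure, both basepoint inclusions are cofibrations, so this strict pushout computes the homotopy pushout. Comparing it with the pushout of the span $\Delta^0\leftarrow \Delta^0\rightarrow \Delta^0$ via the collapse maps to the point gives a map of pushout squares
\[
\begin{CD}
T_1^*(\Delta^1) @<<< \Delta^0 @>>> T_2^*(\Delta^1)\\
@VVV @VVV @VVV\\
\Delta^0 @<<< \Delta^0 @>>> \Delta^0
\end{CD}
\]
in which all three vertical maps are weak equivalences (the outer two because the sources are weakly contractible, the middle one being the identity) and the horizontal maps $\Delta^0\to T_i^*(\Delta^1)$ are cofibrations. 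The gluing lemma then yields that the induced map $T_1^*(\Delta^1)\vee T_2^*(\Delta^1)\to \Delta^0$ is a weak equivalence. Hence $(T_1+T_2)^*(\Delta^1)$ is weakly contractible, and a final application of Lemma \ref{delta1hom} shows that $(T_1+T_2)^*$ preserves weak equivalences.

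The main obstacle is the homotopical bookkeeping in the last step: one must check that the pointed union really is the categorical pushout along the single shared vertex and that the vertex inclusions are cofibrations, so that the strict pushout agrees with the homotopy pushout. Once cofibrancy is established the gluing lemma does all the work, and the only genuine content beyond this is the standard fact that a homotopy pushout of weakly contractible simplicial sets over a point is weakly contractible.
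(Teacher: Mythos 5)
Your proposal is correct and follows essentially the same route as the paper: reduce via Lemma \ref{delta1hom} to the weak contractibility of $(T_1+T_2)^*(\Delta^1)$, rewrite this as the wedge $T_1^*(\Delta^1)\vee T_2^*(\Delta^1)$, and conclude. The only difference is that the paper simply asserts that a wedge of weakly contractible simplicial sets is weakly contractible, whereas you justify it carefully with the pushout presentation and the gluing lemma.
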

\begin{proof}
By Lemma \ref{delta1hom}, it is enough to observe that if $T^*_1(\Delta^1)$ and $T^*_2(\Delta^1)$ are weakly contractible, then so is 
\[ (T_1+T_2)^*(\Delta_1)\simeq T_1^*(\Delta^1) \vee T_2^*(\Delta^1). \qedhere \]
\end{proof}
We now conclude:

\begin{theorem}\label{main result}
The functor $T^*: SSet\rightarrow SSet$ preserves weak equivalences if and only if the constant $0$ functor does not appear in the representation of $T$ as a finite sum of the basis functors.    
\end{theorem}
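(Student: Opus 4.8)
The plan is to reduce the statement to a computation of $T^*(\Delta^1)$, combining the two characterizations already established: Theorem \ref{Main Theorem}, which expresses $T$ as a finite sum of basis functors, and Lemma \ref{delta1hom}, which says that $T^*$ preserves weak equivalences precisely when $T^*(\Delta^1)$ is weakly contractible. First I would fix the representation $T = B_1 + \cdots + B_{n+1}$ with each $B_i \in \{Id, Op, C_0\}$ provided by Theorem \ref{Main Theorem}, and record the values $Id^*(\Delta^1) = \Delta^1$, $Op^*(\Delta^1) = [\Delta^1]^{op}$, and $C_0^*(\Delta^1) = \Delta^0 \sqcup \Delta^0$ coming from Corollary \ref{the zero functors}. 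Applying the wedge identity $(T_1 + T_2)^*(\Delta^1) = T_1^*(\Delta^1) \vee T_2^*(\Delta^1)$ inductively then presents $T^*(\Delta^1)$ as a zigzag on $n+2$ ordered vertices, in which consecutive vertices are joined by a forward edge when $B_i = Id$, by a backward edge when $B_i = Op$, and by no edge at all when $B_i = C_0$.

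For the \emph{if} direction, suppose no summand equals $C_0$. Then every $B_i^*(\Delta^1)$ is one of the single edges $\Delta^1$ or $[\Delta^1]^{op}$, each of which is weakly contractible. Since the identity and opposite functors preserve weak equivalences, an induction using Lemma \ref{sumpreserving} shows that $T^*$ does as well; equivalently, the wedge $T^*(\Delta^1)$ is a connected one-dimensional zigzag, hence a tree, hence weakly contractible, so Lemma \ref{delta1hom} applies.

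For the \emph{only if} direction I would argue by contraposition. If $C_0$ appears as some summand $B_i$, then the vertices $v_{i-1}$ and $v_i$ are not joined by an edge, so the zigzag $T^*(\Delta^1)$ splits into at least two connected components. A simplicial set with disconnected geometric realization is not weakly contractible, so by Lemma \ref{delta1hom} the functor $T^*$ fails to preserve weak equivalences.

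The main obstacle is making the dictionary between the combinatorial wedge and these topological conclusions precise. I must verify that a connected zigzag assembled from $\Delta^1$ and $[\Delta^1]^{op}$ is genuinely weakly contractible, irrespective of the orientations of its edges; this can be seen either from an explicit collapse of the tree or by iterating Lemma \ref{sumpreserving}. I must also confirm that inserting even a single $C_0$ truly disconnects the realization. Finally I would dispose of the base case $n = 0$ separately, where $T$ is a single basis functor and the claim is immediate from Corollary \ref{the zero functors}.
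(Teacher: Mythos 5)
Your proposal is correct and follows exactly the route the paper intends: the paper offers no written proof for this theorem (it simply says ``We now conclude''), leaving it as the assembly of Theorem \ref{Main Theorem}, the wedge identity, Lemma \ref{delta1hom}, and Lemma \ref{sumpreserving} that you carry out explicitly. Your added observations --- that a $C_0$ summand disconnects the wedge (so it cannot be weakly contractible) and that a connected zigzag of $\Delta^1$ and $[\Delta^1]^{op}$ is a tree and hence contractible --- are precisely the details the paper glosses over with ``obviously.''
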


In the rest of section we consider some examples of the generalized edgewise subdivisions of simplicial sets.

\begin{example}
Segal's edgewise subdivision can be applied more than once to a given simplicial sets. Let $\mathcal{E}^n$ denote the endofunctor on $SSet$ that is obtained by applying the Segal's edgewise subdivision $n$ times. This functor is induced by the following endofunctor on $\Delta$   
\[\sum_{i=1}^n Op+ Id\]
 
 The structure of  $\mathcal{E}^2(\Delta^2)$ follows.
  
 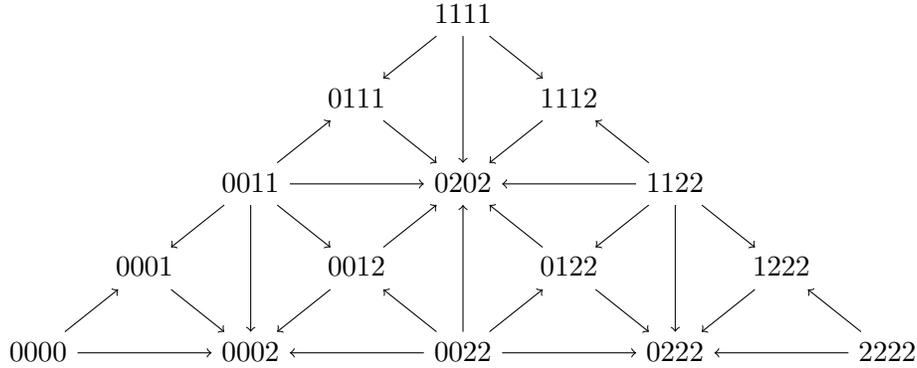
\begin{figure}[H]\label{e2delta2} 
  \begin{center}
\begin{tikzpicture}\matrix (m) [matrix of math nodes, row sep=1.5em,
column sep=1em, text height=1.5 ex, text depth=0.25ex]
{ & && &1111&  & & &  \\
& & & 0111 & & 1112 & & &\\
& & 0011& & 0202 & & 1122 & & \\
& 0001 & & 0012 & &0122 & & 1222 &\\
0000 & & 0002 & &0022 & &0222 & &2222\\};
\path[->]
(m-1-5) edge   (m-2-4)
(m-1-5) edge   (m-2-6)
(m-1-5) edge   (m-3-5)
(m-2-4) edge   (m-3-5)
(m-2-6) edge   (m-3-5)
(m-3-3) edge   (m-2-4)
(m-3-3) edge   (m-3-5)
(m-3-3) edge   (m-4-2)
(m-3-3) edge   (m-4-4)
(m-3-3) edge   (m-5-3)
(m-3-7) edge   (m-2-6)
(m-3-7) edge   (m-3-5)
(m-3-7) edge   (m-4-6)
(m-3-7) edge   (m-4-8)
(m-3-7) edge   (m-5-7)
(m-5-1) edge   (m-4-2)
(m-5-1) edge   (m-5-3)
(m-4-2) edge   (m-5-3)
(m-4-4) edge   (m-5-3)
(m-4-4) edge   (m-3-5)
(m-5-5) edge   (m-5-3)
(m-5-5) edge   (m-5-7)
(m-5-5) edge   (m-3-5)
(m-5-5) edge   (m-4-4)
(m-5-5) edge   (m-4-6)
(m-4-6) edge   (m-3-5)
(m-4-6) edge   (m-5-7)
(m-5-9) edge   (m-5-7)
(m-5-9) edge   (m-4-8)
(m-4-8) edge   (m-5-7);
\end{tikzpicture}
\end{center}
  \caption{Second Segal's edgewise subdivision of $\Delta^2$}
  \end{figure}
\end{example}
\begin{example}
The opposite functor $Op$ is weak equivalence preserving.  Note that $Op$ does not subdivide the simplices, but it reverses the orientation of the arrows.  
\end{example}
\begin{example}
Let denote the endofunctor on $SSet$, that is induced by the endofunctor $Id+Id$ on $\Delta$ by $\mathcal{ID}^2$. By Theorem \ref{sumpreserving} it is also weak equivalence preserving. The structure of  $\mathcal{ID}^2(\Delta^2)$ is given in Figure 4.

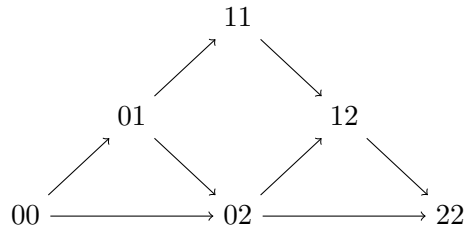
\begin{figure}[H]\label{id2delta2}
   \begin{center}
\begin{tikzpicture}\matrix (m) [matrix of math nodes, row sep=2em,
column sep=2em, text height=1.5 ex, text depth=0.25ex]
{ &&11&  &  \\
& 01& & 12 &\\
00 & & 02 & & 22 \\};
\path[->]
(m-2-2) edge   (m-1-3)
(m-1-3) edge   (m-2-4)

(m-3-1) edge   (m-2-2)
(m-3-1) edge   (m-3-3)
(m-3-3) edge   (m-3-5)
(m-2-4) edge   (m-3-5)
(m-2-2) edge   (m-3-3)
(m-3-3) edge   (m-2-4);
\end{tikzpicture}
\end{center}
  \caption{$\mathcal{ID}^2$ subdivision of $|\Delta^2|$}
  \end{figure}
\end{example}

\nocite{*}
\bibliographystyle{abbrv}
\bibliography{biblio}

\end{document}